\newcommand{\ignore}[1]{\relax}
\numberwithin{equation}{section}
\newcommand{\C}{\mathbb C}
\newcommand{\R}{\mathbb R}
\newcommand{\Z}{\mathbb Z}
\newcommand{\T}{\mathbb T}
\newcommand{\Arg}{\operatorname{Arg}}
\newtheorem{lem}{Lemma}
\newtheorem{claim}{Claim}[section]
\newtheorem{theorem}[lem]{Theorem}
\newtheorem{corollary}[lem]{Corollary}
\newtheorem{thm}[lem]{Theorem}
\newtheorem{coro}[lem]{Corollary}
\newtheorem{prop}[lem]{Proposition}
\theoremstyle{definition}
\newtheorem{condition}[claim]{Condition}
\newtheorem{defn}[claim]{Definition}
\newtheorem{problem}[claim]{Problem}
\newtheorem{exa}[claim]{Example}
\theoremstyle{remark}
\newtheorem{rmk}[claim]{Remark}
\newcommand{\ctor}{(\C^\times)^{2n}}
\newcommand{\rtor}{(\R^\times)^{2n}}
\newcommand{\sonen}{(S^1)^{2n}}
\newcommand{\conj}{\operatorname{conj}}
\newcommand{\dd}{\partial}
\newcommand{\am}{\mathcal{A}}
\newcommand{\cp}{{\mathbb C}{\mathbb P}}
\newcommand{\rp}{{\mathbb R}{\mathbb P}}
\newcommand{\Log}{\operatorname{Log}}
\newcommand{\Vol}{\operatorname{Vol}}
\renewcommand{\setminus}{\smallsetminus}
\newtheorem{theorem}{Theorem}
\newtheorem{claim}[theorem]{Claim}
\newtheorem{condition}[theorem]{Condition}
\newtheorem{problem}[theorem]{Problem}
\newenvironment{proof}[1][Proof]{\noindent\textbf{#1.} }{\ \rule{0.5em}{0.5em}}
\newcommand{\coam}{\mathcal B}
\begin{document}
\title{Amoebas of half-dimensional varieties}
\author{Grigory Mikhalkin}
\address{Universit\'e de Gen\`eve,  Math\'ematiques, Villa Battelle, 1227 Carouge, Suisse}
\begin{abstract}
An $n$-dimensional algebraic variety in $\ctor$ covers its
amoeba as well as its coamoeba generically finite-to-one.
We provide an upper bound for the volume of these amoebas as 
well as for
the number of points in the inverse
images under the amoeba and coamoeba maps.  
\end{abstract}
\thanks{Research is supported in part by the grant TROPGEO of the European Research Council,
by the grants 140666 and 141329 of the Swiss National Science Foundation,
and by the NCCR SwissMAP of the Swiss National Science Foundation.}
\maketitle
\section{Introduction}
\subsection{Definitions}
Consider an $n$-dimensional 
algebraic variety $V\subset\ctor$.
\begin{defn}[Gelfand-Kapranov-Zelevinsky \cite{GKZ}]
The {\em amoeba} $\am$ of $V$
is the image $$\am=\Log(V)\subset\R^{2n}$$ of $V$
under the coordinatewise logarithm map $\Log:\ctor\to\R^{2n}$,
$$\Log(z_1,\dots,z_{2n})=(\log|z_1|,\dots,\log|z_{2n}|).$$
The restriction $\Log|_V$ is called the {\em amoeba map} for $V$.
\end{defn}
\begin{defn}[cf. Passare \cite{Pa-coam}]
The {\em coamoeba} (or {\em alga}, cf. \cite{FHKV})
$\coam$ of $V$
is the image $$\coam=\Arg(V)\subset\sonen$$ of $V$
under the coordinatewise argument map $\Arg:\ctor\to(\R/2\pi\Z)^{2n}\approx\sonen$,
$$\Arg(z_1,\dots,z_{2n})=(\arg(z_1),\dots,\arg(z_{2n})).$$
The restriction $\Arg|_V$ is called the {\em coamoeba map} for $V$.
\end{defn}
For coamoebas it is often more convenient to use
argument taken mod $\pi$ instead of mod $2\pi$,
(cf. \cite{MiOk}).
Namely, we denote $T_\pi=\R/\pi\Z$ and for
$z\in\C^\times$ we define
$$\arg_{\pi}(z)=(\arg(z)\hspace{-10pt}\mod\pi)\in\T_\pi.$$
In other words, $\arg_\pi$ is the composition of $\arg$ and 
the double covering $\R/2\pi\Z\to T_\pi$.
Then we define $\Arg_\pi:\ctor\to(\R/\pi\Z)^{2n}=T_\pi^{2n}$,
and
$$\Arg_\pi(z_1,\dots,z_{2n})=(\arg_\pi(z_1),\dots,\arg_\pi(z_{2n})).$$
We call $\coam_\pi=\Arg_\pi(V)\subset T^{2n}_\pi$ the
{\em rolled coamoeba} of $V$.   

We consider two antiholomorphic involutions on $\ctor$: 
$\conj,\conj':\ctor\to\ctor$ defined by
\begin{equation}\label{conj}
\conj(z_1,\dots,z_{2n})=(\bar z_1,\dots,\bar z_{2n}),\
\conj'(z_1,\dots,z_{2n})=(\frac1{\bar z_1},\dots,\frac1{\bar z_{2n}}).
\end{equation}
To each $n$-dimensional variety $V\subset\ctor$ we associate 
two integer numbers. Note that if $A,B\subset\ctor$ are two complex
subvarieties of complimentary dimensions then for an open dense
subset of $\epsilon\in\ctor$ all intersection points from
$A\cap\epsilon B$ are transverse and their number does not depend 
on $\epsilon$ (as long as it is generic).
Here $\epsilon B$ stands for the coordinatewise multiplication of $B$ by
$\epsilon$ in $\ctor$ 
(in other words for the multiplicative translation).
We define the toric intersection number $A.B\in\Z_{\ge 0}$
to be the number of points in $\#(A\cap\epsilon B)$ for a generic $\epsilon$
(times the corresponding multiplicities in the case when the corresponding
components of $A$ or $B$ are not simple. i.e. if $A$ or $B$ are not reduced).
Clearly, $A.B=B.A$.

\begin{defn}
We define the {\em $\conj$-degree}
$$\alpha(V)=V.\conj (V)\in\Z_{\ge 0}$$
and the {\em $\conj'$-degree}
$$\beta(V)=V.\conj' (V)\in\Z_{\ge 0}.$$
\end{defn}

\begin{defn} Let $A$ and $B$ be two smooth (differentiable)
manifolds of the same dimension and $f:A\to B$ be a smooth map.
We say that $f$ covers its image at most $m$ times if for any
point $p\in B$ which is regular for $f$
the inverse image $f^{-1}(p)$ consists of at most $m$ points.

More generally, if $A$ is a (not necessarily smooth)
real or complex algebraic variety (such as $V\subset\ctor$
in the case when it is singular),
it admits a stratification into smooth manifolds.
Consider a map $f:A\to B$ whose restriction $f|_\Sigma$
to every stratum $\Sigma\subset A$ is smooth.
Similarly,
we say that $f$ covers its image at most $m$ times if for any
point $p\in B$ the inverse image 
$f^{-1}(p)$ consists of at most $m$ points
unless $p$ is a critical point for $f|_\Sigma$,
where $\Sigma\subset A$ is a stratum of our stratification.
\end{defn}

\subsection{Statement of the results}
The main results of this paper are contained
in the following theorem.
\begin{thm}\label{mthm}
Let $V\subset\ctor$ be an algebraic $n$-dimensional variety.
Then the amoeba $\am(V)$ is covered by
the map $\Log|_V$ at most $\beta(V)$ times,
while the rolled coamoeba $\coam_\pi(V)$ is covered
by the map $\Arg_\pi|_V$ at most $\alpha(V)$ times.
Furthermore,
$$\Vol(\am)\le \frac{\pi^{2n}}{2}\alpha(V).$$
\end{thm}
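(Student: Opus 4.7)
The theorem contains three claims. I would prove the covering bounds first, then combine them with the coarea formula to obtain the volume estimate.

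\emph{Covering bounds.} I describe the $\Arg_\pi$-case; the $\Log$-case is identical with $\conj'$ replacing $\conj$. Fix a regular value $q\in T_\pi^{2n}$ of $\Arg_\pi|_V$ with preimages $z_1,\dots,z_m\in V$. Since $\arg_\pi(z_j)=q$ and $2\arg$ is well-defined modulo $2\pi$, the ratio $c:=z_j/\bar z_j=e^{2i\arg z_j}$ is the same for every $j$. Each $z_j$ therefore lies in $V\cap c\cdot\conj(V)$. In logarithmic coordinates $w=\log z$, the involution $\conj$ acts as $w\mapsto\bar w$, so the tangent space of $c\cdot\conj(V)$ at $z_j$ equals the complex-conjugate subspace $\overline{T_{z_j}V}\subset\C^{2n}$. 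The regularity of $\Arg_\pi|_V$ at $z_j$ translates to $T_{z_j}V\cap\R^{2n}=0$ in $w$-coordinates, which for the complex subspace $T_{z_j}V$ is equivalent to $T_{z_j}V\cap\overline{T_{z_j}V}=0$, i.e., to the transversality of $V$ and $c\cdot\conj(V)$ at $z_j$. Perturbing $c$ slightly to a generic value makes the intersection proper while each $z_j$ persists as a transverse point with local multiplicity $1$, so $m\le V.\conj(V)=\alpha(V)$. The analogous argument with $\conj'$ (which in $w$-coordinates acts as $w\mapsto-\bar w$) gives $m\le\beta(V)$ for the fibers of $\Log|_V$.

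\emph{Volume bound: Jacobian equality and coarea.} At any smooth regular point $z\in V$ the pointwise identity
\[
|\det d(\Log|_V)(z)|=|\det d(\Arg_\pi|_V)(z)|
\]
holds. In logarithmic coordinates the two differentials are the real projections $\re,\im\colon T_zV\to\R^{2n}$; since $T_zV$ is closed under multiplication by $i$, and $\im=\re\circ J$ with $J$ multiplication by $i$ (a Euclidean isometry of $T_zV$), and since the covering $(S^1)^{2n}\to T_\pi^{2n}$ is a local isometry, the two Jacobians agree. The coarea formula and the covering bound for $\Arg_\pi|_V$ then give
\[
\int_\am \#(\Log^{-1}(p)\cap V)\,dp=\int_V|\det d(\Log|_V)|\,d\Vol_V=\int_V|\det d(\Arg_\pi|_V)|\,d\Vol_V\le\alpha(V)\,\pi^{2n}.
\]

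\emph{The factor of two.} The remaining ingredient, and the step I expect to be the main obstacle, is the a.e.\ lower bound $\#(\Log^{-1}(p)\cap V)\ge 2$ on $\am$. Heuristically, if $\Log|_V$ were generically injective, then near a regular value $V$ would be a section $u\mapsto(u,\phi(u))$ of $\Log$, and the complex structure of $V$ would force $d\phi$ to be a pointwise almost-complex structure on $\R^{2n}$; algebraicity of $V$ together with the non-degeneracy assumption $\Vol(\am)>0$ should rigidify $V$ to a translate of an algebraic subtorus, whose amoeba is a proper rational affine subspace of measure zero---a contradiction. Granting $\#(\Log^{-1}(p)\cap V)\ge 2$ on a full-measure subset of $\am$,
\[
\Vol(\am)\le\tfrac12\int_\am \#(\Log^{-1}(p)\cap V)\,dp\le\tfrac12\alpha(V)\,\pi^{2n},
\]
completing the proof.
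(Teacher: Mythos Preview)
Your covering bounds are correct and follow the paper's line: the fiber $(\Arg_\pi|_V)^{-1}(p)$ sits inside $V\cap e^{2ip}\conj(V)$ because $e^{ip}(\R^\times)^{2n}$ is the fixed locus of $z\mapsto e^{2ip}\bar z$, and similarly for $\Log$ with $\conj'$. Your transversality verification is in fact more explicit than the paper's, which simply records the inclusion and invokes genericity.

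Your pointwise Jacobian identity $|\det d(\Log|_V)|=|\det d(\Arg_\pi|_V)|$ is correct and yields the multivolume equality $\operatorname{MultiVol}(\am)=\operatorname{MultiVol}(\coam_\pi)\le\alpha(V)\pi^{2n}$ exactly as you say. The paper arrives at the same equality differently: it shows the $2n$-form $\omega=\prod_j dx_j-\prod_j dy_j$ vanishes identically on $V$, by expanding in $dz_j,d\bar z_k$ and observing that every surviving monomial has either more than $n$ holomorphic or more than $n$ antiholomorphic factors. That gives the \emph{signed} equality of the two pullback volume forms; your argument via $\im=\re\circ(-J)$ with $\det_\R(-J)=1$ on the complex $n$-plane $T_zV$ in fact also gives this signed statement, though you only use the absolute value.

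The factor-of-two step is where you have a genuine gap. Your heuristic tries to rule out the case where $\Log|_V$ is generically one-to-one, but even if that argument could be made rigorous it would not yield $\#(\Log|_V)^{-1}(q)\ge 2$ almost everywhere on $\am$: the fiber cardinality could equal $1$ on a positive-measure subset of $\am$ and be larger elsewhere without $\Log|_V$ being globally a section. The paper's argument is short and sidesteps this entirely. The map $\Log:\ctor\to\R^{2n}$ is proper, hence so is $\Log|_V$, and therefore $\Log|_V$ has a well-defined integer degree. The bound $\operatorname{MultiVol}(\am)<\infty$ forces $\am\neq\R^{2n}$, so some point has empty fiber and the degree is zero. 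Consequently at every regular value $q\in\am$ the signed count of preimages vanishes, so the unsigned count is even and in particular at least $2$. This gives $\Vol(\am)\le\tfrac12\operatorname{MultiVol}(\am)\le\tfrac12\alpha(V)\pi^{2n}$ immediately.
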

Note that the conventional (i.e. non-rolled) coamoeba
$\coam(V)$ cannot be covered more than the rolled coamoeba.
Thus it is also covered by the coamoeba map at most $\alpha(V)$ times.

If $V$ is a complete intersection
then we can easily compute the $\conj$-degree
$\alpha$ as well as the $\conj'$-degree $\beta$ by means of the
Bernstein-Kouchnirenko calculus as follows.
\begin{defn}
We say that
$$V=\bigcap\limits_{j=1}^n V_j\subset\ctor$$
is a toric {\em complete intersection}
of hypersurfaces $V_1,\dots,V_n\in\ctor$
if 
$$V=\lim\limits_{\epsilon_j\to 0}
\bigcap_{j=1}^n\epsilon_j V_j.$$
Here the limit is taken in the sense of Hausdorff metric
on the subsets of $\ctor$ (with a group invariant metric)
and $\epsilon_j$. In particular, we require this limit to exist.
\end{defn}

\begin{prop}\label{c-int}
Suppose that $V=\bigcap\limits_{j=1}^n V_j\subset\ctor$
is a complete intersection of hypersurfaces $V_j$ with
Newton polyhedra $\Delta_j\subset\R^{2n}$, $j=1,\dots,n$.
Then we have
$$\alpha(V)=\Vol(\Delta_1,\dots,\Delta_n,\Delta_1,\dots,\Delta_n)$$
and 
$$\beta(V)=\Vol(-\Delta_1,\dots,-\Delta_n,\Delta_1,\dots,\Delta_n).
$$
Here $\Vol$ stands for the mixed volume of $2n$ polyhedra in $\R^{2n}$.
\end{prop}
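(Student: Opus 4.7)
The strategy is to reduce both formulas to the Bernstein-Kouchnirenko theorem applied to a system of $2n$ Laurent polynomials in $\ctor$; the only non-standard input is identifying the Newton polyhedra of the anti-holomorphic transforms of the $V_j$.

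First, I would pin down the Newton polyhedra of $\conj(V_j)$ and $\conj'(V_j)$. Writing $V_j=\{f_j=0\}$ with $f_j(z)=\sum_{\alpha\in\Delta_j\cap\Z^{2n}}a_{j,\alpha}z^\alpha$, a point $z\in\ctor$ lies in $\conj(V_j)$ iff $\bar z\in V_j$, equivalently $\overline{f_j(\bar z)}=\sum_\alpha\overline{a_{j,\alpha}}z^\alpha=0$. Hence $\conj(V_j)$ is the zero locus of a Laurent polynomial with Newton polyhedron $\Delta_j$ (with conjugated coefficients). Similarly $z\in\conj'(V_j)$ iff $\overline{f_j(1/\bar z)}=\sum_\alpha\overline{a_{j,\alpha}}z^{-\alpha}=0$, so $\conj'(V_j)$ is a hypersurface with Newton polyhedron $-\Delta_j$.

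Next, I would exploit the complete intersection hypothesis together with multilinearity of the toric intersection number to reduce to Bernstein-Kouchnirenko. By definition, $V$ is a Hausdorff limit of generic translates $\bigcap_j\epsilon'_jV_j$, and analogously $\conj(V)$ is a Hausdorff limit of $\bigcap_j\epsilon''_j\conj(V_j)$. Continuity of the toric intersection number under such limits, together with a sufficiently generic choice of the translation parameters, yields
$$\alpha(V)=\#\Bigl(\bigcap_{j=1}^n\epsilon'_jV_j\,\cap\,\bigcap_{j=1}^n\epsilon''_j\conj(V_j)\Bigr),$$
each hypersurface being counted with its natural multiplicity. The right-hand side counts the common zeros in $\ctor$ of a generic system of $2n$ Laurent polynomials with Newton polyhedra $\Delta_1,\dots,\Delta_n,\Delta_1,\dots,\Delta_n$, which by the Bernstein-Kouchnirenko theorem equals $\Vol(\Delta_1,\dots,\Delta_n,\Delta_1,\dots,\Delta_n)$. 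The same argument with $\conj'$ in place of $\conj$, using the Newton polyhedra $-\Delta_j$ established above, yields the formula for $\beta(V)$.

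The main difficulty is the reduction in the second step: one must verify that the toric intersection number $V\cdot\conj(V)$ agrees with the generic transverse count of the $2n$ translated hypersurfaces, even though $V$ is only defined as a Hausdorff limit and its components may carry multiplicities. This amounts to a Bertini-plus-continuity argument in $\ctor$: generic multiplicative translates render the combined $2n$-system transverse, and the resulting count is invariant under the simultaneous degenerations of $\epsilon'_j,\epsilon''_j$ implicit in the complete-intersection definition of $V$ and $\conj(V)$. Once this multiplicativity is established, Bernstein-Kouchnirenko immediately delivers both mixed-volume formulas.
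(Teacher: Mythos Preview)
Your proposal is correct and follows essentially the same route as the paper: identify the Newton polyhedra of $\conj(V_j)$ and $\conj'(V_j)$ as $\Delta_j$ and $-\Delta_j$ respectively, then apply Bernstein--Kouchnirenko to the resulting system of $2n$ hypersurfaces. The paper treats the reduction step you flag as the ``main difficulty'' as immediate from the definitions of toric intersection number and toric complete intersection, whereas you spell out the Bertini-plus-continuity justification more carefully; but the argument is the same.
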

\begin{rmk}
Proposition \ref{c-int} and Theorem
\ref{mthm} produce upper bounds for the volumes
of amoebas in the case of toric complete intersections
in terms of the mixed volumes of the corresponding Newton polyhedra.
Such bounds were conjectured
in the talk by Mounir Nisse on
the memorial conference for Mikael Passare in Summer 2013. 
Finiteness of $\Vol(\am)$ was observed in \cite{MaNi}.
\end{rmk}
\begin{proof}
Note that
$\conj(V)$ is a also a toric complete intersection defined by
the polynomials with the same Newton polyhedra,
but conjugate coefficients while $\conj'(V)$ is a toric
complete intersection
defined by the polynomials with $-\Delta_j$
as their Newton polyhedra as we need to make a substitution
$z_j\mapsto\frac1{z_j}$ before conjugation.
The proposition now follows from the Bezout theorem in the form
of Bernstein-Kouchnirenko \cite{Be}, \cite{Kou}.
\end{proof}
In particular, if $V$ is a toric complete intersection with
\begin{equation}\label{Delta1}
\Delta_1=\dots=\Delta_n=
\{(x_1,\dots,x_{2n})\in\R_{\ge 0}^{2n}\ |\
\sum\limits_{j=1}^{2n} x_j \le 1\}
\end{equation}
then $\alpha(V)=1$ and $\beta(V)=\frac{(2n)!}{(n!)^2}$ so 
Theorem \ref{mthm} has the following corollary.
\begin{coro}
If $V=\bar{V}\cap \ctor$, and $\bar V$ is a complete intersection
of hypersurfaces of degrees $d_1,\dots,d_n$ in $\cp^{2n}$ then
$\am(V)$ is covered by the amoeba map at most
$\frac{(2n)!}{(n!)^2}\prod\limits_{j=1}^nd_j^2$ while
$\coam_\pi(V)$ (as well as $\coam(V)$ itself) is covered at most
$\prod\limits_{j=1}^nd_j^2$ times by the coamoeba map.
Furthermore,
$$\Vol(\am)\le\frac{\pi^{2n}}{2}\prod\limits_{j=1}^nd_j^2.$$
\end{coro}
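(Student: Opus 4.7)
The plan is to deduce this corollary immediately from Theorem~\ref{mthm} together with Proposition~\ref{c-int}, reducing everything to a mixed-volume calculation that the excerpt has already carried out for the unit simplex.

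First I would verify that $V = \bar V \cap \ctor$ qualifies as a toric complete intersection in the sense required by Proposition~\ref{c-int}. Writing $\bar V = \bigcap_{j=1}^n \bar V_j$ with $\bar V_j \subset \cp^{2n}$ a generic degree-$d_j$ hypersurface, the restriction $V_j = \bar V_j \cap \ctor$ is cut out in the standard affine chart by a Laurent polynomial whose Newton polyhedron is $\Delta_j = d_j\Delta$, where $\Delta$ is the standard simplex of (\ref{Delta1}). For generic $\bar V_j$'s the required Hausdorff limit exists (small multiplicative translates of the $V_j$ intersect transversely and converge to $V$), so Proposition~\ref{c-int} applies.

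Next I would plug into Proposition~\ref{c-int}:
\begin{align*}
\alpha(V) &= \Vol(d_1\Delta,\ldots,d_n\Delta,d_1\Delta,\ldots,d_n\Delta),\\
\beta(V) &= \Vol(-d_1\Delta,\ldots,-d_n\Delta,d_1\Delta,\ldots,d_n\Delta).
\end{align*}
Mixed volume is multilinear and symmetric in its slots, so each $d_j$ factors out twice from both expressions, giving $\alpha(V) = \bigl(\prod_j d_j^2\bigr)\,\alpha_0$ and $\beta(V) = \bigl(\prod_j d_j^2\bigr)\,\beta_0$, where $\alpha_0,\beta_0$ are the invariants for the case $d_1=\cdots=d_n=1$. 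But these values have already been recorded in the paragraph preceding the corollary: $\alpha_0 = 1$ and $\beta_0 = \frac{(2n)!}{(n!)^2}$. Substituting into Theorem~\ref{mthm} then yields all three bounds, on $\am(V)$, on $\coam_\pi(V)$, and on $\Vol(\am)$. The bound on the non-rolled $\coam(V)$ follows from the remark immediately after Theorem~\ref{mthm}.

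I do not expect any genuine obstacle here: the content of the corollary is really just the combination of Theorem~\ref{mthm} with Proposition~\ref{c-int} together with the remark that mixed volume is multilinear in each slot. The one place that deserves a moment's attention is the normalization of $\Vol$ used in the paper, which is the normalized mixed volume matching the Bernstein--Kouchnirenko intersection count directly (so that $\Vol(\Delta,\ldots,\Delta)=1$ for the unit simplex); since normalization preserves multilinearity in each argument, the extraction of the $d_j^2$ factors is unaffected.
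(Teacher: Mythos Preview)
Your proposal is correct and is exactly the argument the paper intends: the corollary is stated immediately after the computation of $\alpha$ and $\beta$ for the unit-simplex case, and the passage to general degrees is just the multilinearity of mixed volume applied to $\Delta_j=d_j\Delta$ in Proposition~\ref{c-int}, followed by Theorem~\ref{mthm}. Your remarks on the normalization of $\Vol$ and on the toric-complete-intersection hypothesis are accurate and only make explicit what the paper leaves implicit.
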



\section{Proof of the theorem}
\subsection{Bounds for the number of inverse images
for the amoeba and coamoeba maps}
In this section we prove the first part of 
Theorem \ref{mthm} establishing bounds for the 
number of inverse images of $\Log|_V$ and $\Arg_\pi|_V$.

Note that 
$(\Arg_{\pi}|_V)^{-1}(0)=V\cap (\R^\times)^{2n}\subset\ctor$.
We may compare this with $(\Arg|_V)^{-1}(0)=V\cap (\R_{>0})^{2n}$
in the case of the conventional (not rolled) coamoeba map. 
Similarly, for $p\in T_\pi^{2n}=(\R/\pi\Z)^{2n}$ we have
$$(\Arg_{\pi}|_V)^{-1}(p)=V\cap e^{ip} (\R^\times)^{2n}\subset\ctor$$
where $e^{ip}\in\ctor$ is obtained by coordinatewise 
exponentiating of $ip$. If $p$ is a regular value of $\Arg_\pi|_V$
then $V$ intersects $e^{ip} \rtor$ transversally.
This means that every
stratum in a stratification of $V$ into smooth manifolds
intersects $e^{ip} \rtor$ transversally. By the dimension
considerations, the top-dimensional stratum intersects 
$e^{ip} \rtor$ in finitely many points while smaller-dimensional
strata are disjoint from $e^{ip} \rtor$.

Furthermore, we have the inclusion
\begin{equation}
\label{r-inclusion}
(\Arg_\pi|_V)^{-1}(p)=V\cap e^{ip} \rtor\subset V\cap e^{2ip}\conj(V)
\end{equation}
as $e^{ip} \rtor\subset\ctor$ is the invariant locus 
for the complex conjugation
$$(z_1,\dots,z_{2n})\mapsto e^{2ip}(z_1,\dots,z_{2n})$$
in $\ctor$.
Thus the cardinality of $(\Arg_\pi|_V)^{-1}(p)$ for regular $p$
is bounded by $\alpha(V)$ as stated in Theorem 1.

Similarly, for a regular value $q\in\R^{2n}$ of $\Log|_V$
we have a finite number of points in $(\Log|_V)^{-1}(q)$ as well as
the inclusion
\begin{equation}
\label{a-inclusion}
(\Log|_V)^{-1}(q)=V\cap e^{q}S\subset V\cap e^{2q}\conj'(V),
\end{equation}
where $S\subset\ctor$ is the unit torus (the fixed point locus of $\conj'$).
Thus the cardinality of $(\Log|_V)^{-1}(q)$ for regular $p$
is bounded by $\beta(V)$ as stated in Theorem 1. 

\subsection{Estimating the volume of amoeba}
To finish the proof of Theorem \ref{mthm}
we consider the real-valued $2n$-form on $\ctor$
\begin{equation}
\omega=\prod\limits_{j=1}^{2n}dx_j - \prod\limits_{j=1}^{2n}dy_j.
\end{equation}
Here product stands for the exterior product of
differential forms. 
\begin{lem}
\label{lem-vol}
We have
$\omega|_V\equiv 0$.
\end{lem}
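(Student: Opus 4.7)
The plan is to rewrite $\omega$ in terms of holomorphic and antiholomorphic differentials and then exploit the fact that the pullback to a complex $n$-dimensional variety of any form of Hodge bidegree $(p,q)$ with $p > n$ or $q > n$ vanishes. It suffices to verify $\omega|_V = 0$ on the smooth locus of $V$; on the singular stratum, which has lower real dimension, the $2n$-form vanishes for dimension reasons.

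Using $dx_j = \tfrac{1}{2}(dz_j + d\bar z_j)$ and $dy_j = \tfrac{1}{2i}(dz_j - d\bar z_j)$, I would first write
\[
\prod_{j=1}^{2n} dx_j = \frac{1}{4^n} \bigwedge_{j=1}^{2n} (dz_j + d\bar z_j), \qquad \prod_{j=1}^{2n} dy_j = \frac{(-1)^n}{4^n} \bigwedge_{j=1}^{2n} (dz_j - d\bar z_j),
\]
and then expand each wedge product into $2^{2n}$ monomials indexed by subsets $S \subset \{1,\ldots,2n\}$ (taking $dz_j$ when $j \in S$ and $\pm d\bar z_j$ when $j \notin S$). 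Each monomial has bidegree $(|S|, 2n - |S|)$, so upon restriction to $V$ only the $\binom{2n}{n}$ terms with $|S| = n$ can survive.

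Finally I would compare the surviving $(n,n)$-summands. Writing $\alpha_S$ for the wedge of $dz_j$ for $j \in S$ with $d\bar z_j$ for $j \notin S$ in the natural order, the $(n,n)$-component of $\prod dx_j$ is $\frac{1}{4^n} \sum_{|S|=n} \alpha_S$, while the corresponding component of $\prod dy_j$ carries an extra sign $(-1)^{|S^c|} = (-1)^n$ coming from the $n$ factors $-d\bar z_j$. This sign exactly cancels the overall prefactor $(-1)^n$, producing the very same sum $\frac{1}{4^n} \sum_{|S|=n} \alpha_S$. Consequently $(\prod dx_j - \prod dy_j)|_V = 0$. The whole argument is pure bookkeeping, and the only nontrivial point is the sign cancellation, which is the clean coincidence $(-1)^n \cdot (-1)^{2n-n} = 1$ forced by the middle-dimensional condition $|S| = n$.
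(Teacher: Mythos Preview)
Your proof is correct and follows essentially the same approach as the paper: expand $\omega$ in terms of $dz_j$ and $d\bar z_j$ and observe that the $(n,n)$-bidegree terms cancel, while all other bidegrees vanish on a holomorphic $n$-variety. The paper organizes the cancellation via a parity argument (the monomials surviving in the difference have a number of $d\bar z_j$ factors of parity opposite to $n$, hence never equal to $n$), but this is exactly the same sign computation you carry out directly for $|S|=n$.
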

\begin{proof}
We may write
\begin{equation}\label{omega2n}
\omega=\frac{1}{2^{2n}}(\prod\limits_{j=1}^{2n}(dz_j+d\bar z_j)
- (-1)^n \prod\limits_{j=1}^{2n}(d z_j-d\bar z_j)).
\end{equation}
The right-hand side of this expression 
is the sum of monomials of degree $2n$ in $dz_j$ and $d\bar z_k$.
Note that
if $n$ is odd then
there are no monomials with odd number of $d\bar z_j$.
Similarly, if $n$ is even then 
there are no monomials with even number of $d\bar z_j$.
Thus the right-hand side
of \eqref{omega2n} contains only monomials where
either the number of $dz_j$ is more than $n$
or the number of $d\bar z_k$ is more than $n$.
Thus, $\omega$ must vanish everywhere on a holomorphic $n$-variety $V$.
%
\end{proof}

We may consider the cardinality $\#(Log|_V)^{-1}$
of the inverse image of the amoeba map as
a measurable function on $\R^{2n}$ (since the critical locus
of $\Log|_V$ is nowhere dense).
Then $$\operatorname{MultiVol}(\am)=\int\limits_{\R^{2n}}\#((Log|_V)^{-1}(x_1,\dots,x_{2n}))
dx_1\dots dx_{2n}$$ can be thought of as the volume of $\am$
taken with the multiplicities corresponding to the covering
by the amoeba map. Similarly, 
$$\operatorname{MultiVol}(\coam_\pi)=
\int\limits_{T^{2n}_\pi}\#((Arg_\pi|_V)^{-1}(y_1,\dots,y_{2n}))
dy_1\dots dy_{2n}$$
can be thought of as the volume of $\coam_\pi$ taken
with the multiplicities corresponding to the covering
by the coamoeba map.
\begin{coro}\label{multi-eq}
$\operatorname{MultiVol}(\am)=\operatorname{MultiVol}(\coam_\pi)$.
\end{coro}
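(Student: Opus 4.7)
The plan is to realize each multi-volume as an integral over $V$ of the density associated with a pulled-back Lebesgue form, and then invoke Lemma \ref{lem-vol} to identify these two densities pointwise on $V$.

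First, I would apply the area formula (change of variables for a smooth map between two $2n$-dimensional manifolds) to the restrictions $\Log|_V$ and $\Arg_\pi|_V$. On the open dense locus of $V$ obtained by removing the singular points together with the critical loci of the two maps, both restrictions are smooth, and the area formula gives
\begin{align*}
\operatorname{MultiVol}(\am) &= \int_V \bigl|\,(\Log|_V)^*(dx_1\wedge\cdots\wedge dx_{2n})\,\bigr|, \\
\operatorname{MultiVol}(\coam_\pi) &= \int_V \bigl|\,(\Arg_\pi|_V)^*(dy_1\wedge\cdots\wedge dy_{2n})\,\bigr|,
\end{align*}
where $|\cdot|$ denotes the positive density attached to a top-degree form.

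Next, I would identify the pulled-back forms in the coordinates $x_j=\log|z_j|$, $y_j=\arg z_j$ inherited from $\ctor$: namely, $(\Log|_V)^*(dx_1\wedge\cdots\wedge dx_{2n}) = \bigwedge_{j=1}^{2n} d\log|z_j|$ and $(\Arg_\pi|_V)^*(dy_1\wedge\cdots\wedge dy_{2n}) = \bigwedge_{j=1}^{2n} d\arg z_j$, restricted to $V$. By Lemma \ref{lem-vol} the difference of these two $2n$-forms is $\omega|_V \equiv 0$, so the forms agree on (the smooth locus of) $V$; their densities therefore coincide pointwise, and integration over $V$ yields the claimed equality.

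The main subtlety is the convention used in Lemma \ref{lem-vol}: the proof manipulates $dz_j$ and $d\bar z_j$ via the decomposition $dz_j = dx_j + i\,dy_j$, which at face value corresponds to $z_j = x_j + i y_j$ rather than $x_j=\log|z_j|$, $y_j=\arg z_j$. To use the lemma as needed here I would re-read its proof with $\log z_j$ playing the role of the holomorphic coordinate $z_j$: since $\log z_j$ is a local holomorphic coordinate on $\ctor$ with $d\log z_j = d\log|z_j| + i\,d\arg z_j$, every algebraic step of the proof transports verbatim and shows that $\bigwedge_{j} d\log|z_j| - \bigwedge_{j} d\arg z_j$ is a sum of $(p,q)$-monomials with $p\neq n$, each of which vanishes on the $n$-dimensional holomorphic variety $V$. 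Once this identification is in place, the corollary reduces to a one-line consequence of the area formula.
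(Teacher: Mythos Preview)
Your argument is correct and follows the same route as the paper: both compute the multivolumes as integrals over $V$ of the unsigned pulled-back top forms and then invoke Lemma \ref{lem-vol} to identify them pointwise; the paper merely phrases the passage to densities via a split $V=V_+\cup V_-$ according to whether $dx_1\wedge\cdots\wedge dx_{2n}$ agrees with the complex orientation, which is equivalent to your $|\cdot|$. Your remark about reading Lemma \ref{lem-vol} in the holomorphic coordinates $\log z_j$ is exactly the convention the paper is implicitly using.
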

\begin{proof}
Let $V_+\subset V$ be the open subset of $V$ where
 the real $2n$-form $dx_1\wedge\dots\wedge d x_{2n}$ is non-degenerate 
and defines the orientation
that agrees with
the complex orientation of $V$. Let $V_-\subset V$
be the open set where these orientations disagree. 
Note that by Lemma \ref{lem-vol} the form
$dy_1\wedge\dots\wedge dy_{2n}$ also agrees with the complex
orientation on $V_+$ and disagrees on $V_-$.
We have 
$$\operatorname{MultiVol}(\am)=
\int\limits_{V_+}dx_1\wedge\dots\wedge d x_{2n}-
\int\limits_{V_-}dx_1\wedge\dots\wedge d x_{2n}
$$
while
$$\operatorname{MultiVol}(\coam_\pi)=
\int\limits_{V_+}dy_1\wedge\dots\wedge d y_{2n}-
\int\limits_{V_-}dy_1\wedge\dots\wedge d y_{2n}.
$$
The two multivolumes are equal by Lemma \ref{lem-vol}.
\end{proof}

\begin{lem}\label{multi-est}
$$\operatorname{MultiVol}(\am)=
\operatorname{MultiVol}(\coam_\pi)\le \alpha(V)\pi^{2n}.$$
\end{lem}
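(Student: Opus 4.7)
The plan is to combine two facts already in hand: the equality $\operatorname{MultiVol}(\am) = \operatorname{MultiVol}(\coam_\pi)$ from Corollary \ref{multi-eq}, and the pointwise bound on the cardinality of the fibers of $\Arg_\pi|_V$ from the first part of Theorem \ref{mthm}. The inequality will then follow by a direct integration over the flat torus $T_\pi^{2n}$, whose total Lebesgue measure is $\pi^{2n}$.

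More concretely, I would first invoke Corollary \ref{multi-eq} to reduce the problem to bounding $\operatorname{MultiVol}(\coam_\pi)$. By definition,
\begin{equation*}
\operatorname{MultiVol}(\coam_\pi) = \int_{T_\pi^{2n}} \#\bigl((\Arg_\pi|_V)^{-1}(y_1,\dots,y_{2n})\bigr)\, dy_1 \cdots dy_{2n}.
\end{equation*}
The set of points $p \in T_\pi^{2n}$ which fail to be regular values of $\Arg_\pi|_V$ (in the stratified sense of the definition earlier in the paper) is a nowhere dense subset, hence of Lebesgue measure zero, so it does not contribute to the integral. For every regular $p$, the first part of Theorem \ref{mthm}, which was already established via the inclusion \eqref{r-inclusion}, gives
\begin{equation*}
\#\bigl((\Arg_\pi|_V)^{-1}(p)\bigr) \le \alpha(V).
\end{equation*}

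Integrating this pointwise bound over $T_\pi^{2n}$, whose volume with respect to $dy_1 \cdots dy_{2n}$ is exactly $\pi^{2n}$ since each coordinate $y_j$ ranges over $\R/\pi\Z$ of length $\pi$, yields $\operatorname{MultiVol}(\coam_\pi) \le \alpha(V)\, \pi^{2n}$. Combined with the equality from Corollary \ref{multi-eq}, this gives the desired estimate for both multivolumes.

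There is essentially no obstacle here: the real content of the lemma was done in Lemma \ref{lem-vol} (the key vanishing $\omega|_V \equiv 0$, which underlies the equality of the two multivolumes) and in the inclusion \eqref{r-inclusion} (which yields the pointwise bound by $\alpha(V)$). The only thing to be slightly careful about is confirming that the critical locus of $\Arg_\pi|_V$ on each stratum is of measure zero in $T_\pi^{2n}$, which is standard since each stratum is at most $2n$-dimensional and $\Arg_\pi|_V$ is smooth on each stratum, so Sard's theorem applies.
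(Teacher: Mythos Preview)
Your proposal is correct and follows essentially the same approach as the paper: invoke Corollary \ref{multi-eq} for the equality, then use the fiber bound coming from \eqref{r-inclusion} and integrate over $T_\pi^{2n}$, whose total volume is $\pi^{2n}$. The paper's proof is more terse but identical in content.
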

\begin{proof}
By \eqref{r-inclusion} the cardinality of $(\Arg_\pi|_V)^{-1}(p)$
is not greater than $\alpha(V)$
almost everywhere on $T_\pi^{2n}$ while $\Vol(T_\pi^{2n})=\pi^{2n}$. 
\end{proof}

Note that $\Log:\ctor\to\R^{2n}$ is a proper map (inverse images
of compact sets are compact) and thus $\Log|_V:V\to\R^{2n}$
is also proper.
Since $V$ and $\R^{2n}$ are oriented
manifolds of the same dimension
the map $\Log|_V$ has a well-defined degree. 
Recall that this degree is equal to the number of 
inverse images of a generic point $q\in\R^{2n}$
taken with the sign $\pm1$ depending whether
$\Log|_V$ locally preserves the orientation. 
\begin{coro}\label{deg0}
The degree of the amoeba map is zero.
We have $$\operatorname{MultiVol}(\am)=
2\int\limits_{V_+}dx_1\wedge\dots\wedge d x_{2n}=
-2\int\limits_{V_-}dx_1\wedge\dots\wedge dx_{2n}.$$
Furthermore, $\Vol(\am)\le
\frac12 \operatorname{MultiVol}(\am)$.
\end{coro}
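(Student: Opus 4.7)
The plan is to read off all three assertions from the vanishing of the topological degree of $\Log|_V$, combined with the symmetry between $V_+$ and $V_-$ that was already exploited in the proof of Corollary \ref{multi-eq}.

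First I would verify that the degree is well-defined and equal to zero. Since $\Log:\ctor\to\R^{2n}$ is proper and $V$ is closed in $\ctor$, the restriction $\Log|_V$ is proper as well; as $V$ and $\R^{2n}$ are oriented manifolds of the same dimension $2n$ with connected target, $\Log|_V$ carries a well-defined topological degree equal to the signed count of preimages of any regular value. Choosing any regular $q\in\R^{2n}\setminus\am$, the fiber is empty, so this signed count vanishes and hence $\deg(\Log|_V)=0$.

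Next I would turn this into an integral identity. Pulling back $dx_1\wedge\dots\wedge dx_{2n}$ along $\Log|_V$ and integrating over $V$ with its complex orientation, the change-of-variables formula identifies the result with the integral over $\R^{2n}$ of the signed preimage count, which vanishes by the previous step. Splitting $V$ (away from a measure-zero critical locus) as $V_+\cup V_-$ yields
$$\int_{V_+}dx_1\wedge\dots\wedge dx_{2n}+\int_{V_-}dx_1\wedge\dots\wedge dx_{2n}=0.$$
Combining this with the formula $\operatorname{MultiVol}(\am)=\int_{V_+}dx_1\wedge\dots\wedge dx_{2n}-\int_{V_-}dx_1\wedge\dots\wedge dx_{2n}$ established in the proof of Corollary \ref{multi-eq}, I immediately get the two stated equalities $\operatorname{MultiVol}(\am)=2\int_{V_+}=-2\int_{V_-}$.

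For the final inequality I would argue pointwise on $\am$: at every regular value $q\in\am$, the signed count of preimages vanishes, so the number of $V_+$-preimages equals the number of $V_-$-preimages; since $q$ lies in the image, there is at least one preimage on each side, and the total count is therefore at least $2$. Integrating the pointwise bound $\#(\Log|_V)^{-1}(q)\ge 2$ over $\am$ yields $\operatorname{MultiVol}(\am)\ge 2\Vol(\am)$, as desired. The only real subtlety is the first step: neither $V$ nor $\R^{2n}$ is compact, so one must check that the degree is globally well-defined. This is handled entirely by properness of $\Log|_V$ together with connectedness of $\R^{2n}$, which ensure that the signed preimage count is locally constant on the open dense set of regular values and is therefore a single integer across the whole target.
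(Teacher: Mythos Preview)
Your argument follows the same route as the paper's, but there is a genuine gap in the first step: you assert that you may ``choose any regular $q\in\R^{2n}\setminus\am$'' without explaining why such a point exists. A priori nothing rules out $\am=\R^{2n}$; this is precisely where the paper invokes Lemma~\ref{multi-est}. Since $\operatorname{MultiVol}(\am)\le\alpha(V)\pi^{2n}<\infty$ while $\Vol(\R^{2n})=\infty$, the complement $\R^{2n}\setminus\am$ must be nonempty, and only then can you exhibit a regular value with empty fiber. You flagged the \emph{well-definedness} of the degree as ``the only real subtlety,'' but properness gives that for free; the actual content is showing the degree \emph{vanishes}, and that requires Lemma~\ref{multi-est}. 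The same finiteness is also implicitly needed in your second step to guarantee that $\int_{V_+}$ and $\int_{V_-}$ are each finite before you add and subtract them. Once Lemma~\ref{multi-est} is inserted at the start, the remainder of your argument matches the paper's.
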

\begin{proof}
As the multivolume of $V$ is bounded by Lemma \ref{multi-est}
we have $\R^{2n}\setminus\am\neq\emptyset$
and thus the degree of $\Log|_V$ must be zero.
Each generic point $q\in\am$
is covered by $V_+$ and $V_-$ the same number of times.
\end{proof}

\begin{rmk}\label{alpha-parity}
Note that Corollary \ref{deg0} immediately implies
that $\beta(V)$ is always even as it coincides
with the degree of the amoeba map $\Log|_V$
(this fact is also easy to deduce from symmetry reasons).
However, as the maps $\Arg|_V$ and $\Arg_\pi|_V$
are not proper, we cannot apply the same reasoning.
Note, in particular, that the parity of $(\Arg|_V)^{-1}(p)$
is different for different generic points of $(\R/2\pi\Z)^{2n}$
already in the case when $V\subset\cp^2$
is a generic line (cf. e.g. \cite{Mi-am}).

In the same time, \eqref{r-inclusion} implies 
that for 
the parity of $(\Arg_\pi|_V)^{-1}(p)$
coincides with $\alpha(V)$ for generic points $p\in T_\pi^{2n}$
as non-real intersection points of $e^{-ip}V$ and $e^{ip}\conj(V)$
come in pairs.
Thus
the rolled coamoeba map 
has a well-defined degree mod 2 determined by $\alpha(V)$.
\end{rmk}

Corollary \ref{deg0} implies that 
$$\Vol(\am)=\Vol(\Log(V_+))\le
\frac12\operatorname{MultiVol}(\coam_\pi)\le
\frac{\pi^{2n}}{2}\alpha(V).$$
This finishes the proof of Theorem \ref{mthm}.

\section{Some remarks and open problems}
\subsection{Example: linear spaces in $\cp^{2n}$}
Let $L\subset\cp^{2n}$ be an $n$-dimensional linear 
subspace that is generic with respect to the coordinate
hyperplanes of $\cp^{2n}$. Then $V=L\cap\ctor$
can be presented as a complete intersection of 
hyperplanes with the Newton polyhedra given by \eqref{Delta1}.
By Proposition \ref{c-int} we have $\alpha(V)=1$.
By Remark \ref{alpha-parity} the set
$(\Arg_\pi|_V)^{-1}(p)$ must consist of a single point
for almost all values $p\in T_\pi^{2n}$,
so the inequality of Lemma \ref{multi-est} turns
into equality.
We get the following proposition.
\begin{prop}[cf. \cite{Jo}, \cite{NiPa}]
\label{linear-mult}
If $V=\bigcap\limits_{j=1}^n\subset\ctor$ is a transverse intersection
of $n$ hyperplanes
$$H_j=\{(z_1,\dots,z_{2n})\ |\ a_{j0}+
\sum\limits_{k=1}^{2n}a_{jk}z_k=0\}$$
with $\prod\limits_{j=1}^n\prod\limits_{k=0}^{2n}a_{jk}\neq 0$
then
$$\operatorname{MultiVol}(\am)=
\operatorname{MultiVol}(\coam_\pi)=
\Vol(\coam_\pi)=\pi^{2n}.$$
\end{prop}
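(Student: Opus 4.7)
The plan is to chain together the already-established ingredients: Proposition \ref{c-int} to pin down $\alpha(V)$, Theorem \ref{mthm} to get an upper bound on the fiber cardinality of $\Arg_\pi|_V$, Remark \ref{alpha-parity} to get a matching lower bound via a parity argument, and Corollary \ref{multi-eq} to transfer the resulting equality from the coamoeba side to the amoeba side.

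First I would compute $\alpha(V)$. The Newton polyhedra $\Delta_j$ are all equal to the standard simplex from \eqref{Delta1}, so by Proposition \ref{c-int}
$$\alpha(V)=\Vol(\Delta_1,\dots,\Delta_n,\Delta_1,\dots,\Delta_n),$$
which is the mixed volume of $2n$ copies of the standard simplex in $\R^{2n}$. With the Bernstein--Kouchnirenko normalization used throughout the paper this equals $(2n)!\cdot\Vol(\Delta)=(2n)!\cdot\frac{1}{(2n)!}=1$, consistent with the remark following \eqref{Delta1}.

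Next I would combine the bound and the parity. Theorem \ref{mthm} gives $\#(\Arg_\pi|_V)^{-1}(p)\le \alpha(V)=1$ for every regular value $p\in T_\pi^{2n}$. On the other hand, by Remark \ref{alpha-parity} the parity of $\#(\Arg_\pi|_V)^{-1}(p)$ at generic $p$ equals $\alpha(V)\bmod 2=1$, so this cardinality is odd, hence at least $1$. Therefore $\#(\Arg_\pi|_V)^{-1}(p)=1$ for almost every $p\in T_\pi^{2n}$. In particular $\Arg_\pi|_V$ is a bijection onto its image off a measure-zero set, so
$$\Vol(\coam_\pi)=\operatorname{MultiVol}(\coam_\pi)=\int_{T_\pi^{2n}}\#(\Arg_\pi|_V)^{-1}(p)\,dp=\Vol(T_\pi^{2n})=\pi^{2n}.$$

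Finally I would invoke Corollary \ref{multi-eq}, which yields $\operatorname{MultiVol}(\am)=\operatorname{MultiVol}(\coam_\pi)=\pi^{2n}$, giving all three equalities asserted in the proposition. There is no real obstacle here: the one point that warrants care is the parity step, since Remark \ref{alpha-parity} is what prevents the coamoeba from simply being empty (or having even multiplicity) and thus upgrades the inequality in Lemma \ref{multi-est} to an equality; once that is in place, every remaining step is a direct application of a previously proved result.
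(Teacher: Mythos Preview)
Your proposal is correct and follows essentially the same route as the paper: compute $\alpha(V)=1$ via Proposition~\ref{c-int}, use the parity observation of Remark~\ref{alpha-parity} together with the upper bound $\#(\Arg_\pi|_V)^{-1}(p)\le\alpha(V)$ to force the generic fiber of $\Arg_\pi|_V$ to be a single point, and then read off the three equalities from Lemma~\ref{multi-est} (now an equality) and Corollary~\ref{multi-eq}. The paper presents exactly this argument in the paragraph immediately preceding the proposition.
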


In the case of $n=1$ we have $\beta(V)=2$.
By Corollary \ref{deg0} we have
$$\Vol(\am)=\frac12\operatorname{MultiVol}(\am)=\frac{\pi^2}2$$
in this case as in \cite{PaRu}. This equality was used by
Passare \cite{Pa-zeta} to give a new proof of Euler's formula
$\zeta(2)=\frac{\pi^2}{6}$.
In the case $n>1$ we have $\beta(V)>2$,
so Proposition \ref{linear-mult} only implies
the inequalities
\begin{equation}
\frac{\pi^{2n}(n!)^2}{(2n!)^2}\le
\Vol(\am)\le\frac{\pi^{2n}}2,
\end{equation}
and $\Vol(\am)$ might vary with $V$. 
Note that our linear subspace $V\subset\ctor$
varies in a $(n^2-n)$-dimensional family if we identify
subspaces that can be obtained from each other
by multiplication by $\epsilon\in\ctor$ (such multiplication
corresponds to a translation of amoeba and thus
does not change its shape or its volume).
\begin{problem}
What are the maximal and minimal possible values 
of $\Vol(\am)$? It would be interesting to solve
this problem already for $n=2$.
\end{problem}

\subsection{MultiHarnack varieties in $\rp^{2n}$}
\begin{defn}\label{mHarnack}
We say that an $n$-dimensional variety $V\subset\ctor$
is {\em multiHarnack} if 
$$\operatorname{MultiVol}(\am)={\pi^{2n}}\alpha(V).$$
\end{defn}

Let us recall the notion of simple Harnack curves
in $(\C^\times)^2$ (introduced in \cite{Mi00}).
According to the maximal volume characterization
given in \cite{MiRu} a curve $V\subset (\C^\times)^2$
can be presented as $V=\epsilon C$ for a simple Harnack
curve $C\subset (\C^\times)^2$ and a multiplicative 
vector $\epsilon\in (\C^\times)^2$ if and only if
we have $\Vol(\am)=\frac{\pi^2}2\alpha(V)$.

This class of curves was generalized to 
a larger class of curves in $(\C^\times)^2$,
(also called { multiHarnack curves})
by Lionel Lang (\cite{Lang}, \cite{Lang15}).
Definition \ref{mHarnack} gives the multiHarnack curves
in the case $n=1$.

According to Proposition \ref{linear-mult} all generic
linear spaces in $\cp^{2n}$ are multiHarnack.
\begin{problem}
Do there exist multiHarnack varieties of higher degree?
\end{problem}
Note that once $n>1$ being multiHarnack no longer 
implies being real even after multiplication by $\epsilon\in\ctor$
already for linear spaces.

\begin{rmk}
It might be instructive to compare Definition \ref{mHarnack}
against another attempt to generalize the definition of
simple Harnack curves from \cite{Mi00} to higher dimensions.
The survey \cite{Mi-am} gave a definition of torically 
maximal hypersurfaces of dimension $n$ generalizing
the Definition from \cite{Mi00} for $n=1$. 
However, it was recently shown (see \cite{BrMiRi})
that all torically maximal hypersurfaces in $\rp^{n+1}$
for $n>1$ have degree 1.
\end{rmk}

\subsection{Foliation of $\dd\am$}
\newcommand{\Crit}{\operatorname{Crit}}
In this subsection we suppose for simplicity that
$V\subset\ctor$ is smooth (otherwise we may restrict
ourselves to the smooth part of $V$). 
Let us look at the critical locus $C\subset V$
of the map $\Log|_V$
and its image $D=\Log(C)
\subset\am\subset\R^{2n}$
(also called the {\em discriminant locus}).
We have the following generalization of Lemma 3 from \cite{Mi00}.
\begin{prop}
The set $C$ consists of the points $z\in V$
where $V$ and $z\rtor$ are tangent.
\end{prop}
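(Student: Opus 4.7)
The plan is to identify the kernel of $d\Log$ explicitly, and then transport the tangency condition across the multiplication-by-$i$ isomorphism using the fact that $T_zV$ is a complex subspace.

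First, I would set up coordinates: writing $z_j=e^{u_j+iv_j}$ on $\ctor$, the amoeba map becomes $\Log(z)=(u_1,\dots,u_{2n})$, so at any point $z$ the kernel of $d\Log$ is spanned by the real vectors $\partial/\partial v_1,\dots,\partial/\partial v_{2n}$. This kernel is exactly the tangent space $T_z(zS)$ to the compact torus fiber $zS\subset\ctor$, where $S=(S^1)^{2n}$. Since $\Log|_V\colon V\to\R^{2n}$ is a map between real $2n$-dimensional manifolds, $z$ is a critical point if and only if $d(\Log|_V)$ fails to be an isomorphism, equivalently
\[
T_zV\cap T_z(zS)\neq 0.
\]

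Second, I would relate this to the tangent space of $z\rtor$. The translated real torus $z\rtor$ is locally parametrized by $(u_1,\dots,u_{2n})\mapsto e^{u+iv_0}$ with $v_0$ fixed by the argument of $z$, so $T_z(z\rtor)$ is spanned by $\partial/\partial u_1,\dots,\partial/\partial u_{2n}$. The complex structure $J$ on $\ctor$ sends $\partial/\partial u_j$ to $\partial/\partial v_j$, and therefore
\[
T_z(zS)=J\,T_z(z\rtor).
\]

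Third, I would use that $V$ is a complex subvariety (or, at singular points, argue stratum by stratum on the smooth part, as stated in the assumption of the subsection), so $T_zV$ is a complex subspace of $T_z\ctor$ and is invariant under $J$. Consequently
\[
T_zV\cap T_z(zS)\;=\;T_zV\cap J\,T_z(z\rtor)\;=\;J\bigl(T_zV\cap T_z(z\rtor)\bigr),
\]
so the left-hand intersection is nontrivial precisely when $T_zV\cap T_z(z\rtor)\neq 0$, which is exactly the tangency condition between $V$ and $z\rtor$ at $z$ (both are real $2n$-dimensional submanifolds of the real $4n$-dimensional $\ctor$, so failure of transversality is equivalent to nontrivial intersection of the tangent spaces). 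Combining the two equivalences proves the proposition.

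I do not expect a serious obstacle: the content is the complementary-dimension observation $T_z(zS)=J\,T_z(z\rtor)$ together with $J$-invariance of $T_zV$. The one step to state carefully is the translation from ``critical for $\Log|_V$'' to the linear-algebraic intersection condition, which is just the equal-dimension case of the rank theorem, and the analogy with Lemma~3 of \cite{Mi00} (where $n=1$) can be invoked for motivation.
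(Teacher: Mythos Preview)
Your argument is correct and is exactly the paper's approach spelled out in more detail: the paper's proof says in two sentences that $z\in C$ iff $T_zV$ meets the tangent space to the argument torus $\Log^{-1}(q)$, and that multiplying such a vector by $i$ yields a vector tangent to both $V$ and $z\rtor$. Your use of $J$ and the explicit identification $T_z(zS)=J\,T_z(z\rtor)$ is precisely this ``multiply by $i$'' step.
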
 
As usual, $z\rtor$ stands for the coordinatewise
multiplication of $\rtor$ by $z\in\ctor$.
\begin{proof}
We have $z\in C$ iff there are vectors in $T_z V$
tangent to the argument torus $\Log^{-1}(q)$, where $q=\Log(z)$.
Any such vector multiplied by $i$ gives a vector tangent
both to $V$ and $z\rtor$, and vice versa.
\end{proof}
\begin{defn}
Let $z\in C$. Denote
$$F(z)=T_z(V)\cap T_z(z\rtor)\subset T_z(\ctor).$$
It is a real vector subspace of the tangent space $T_z(\ctor)$.
The {\em rank} of $z\in C$ is $\dim_{\R} F(p)$.
\end{defn}
We denote with $C_r\subset C$ the locus of critical points
of rank at least $r$. 
The following proposition follows immediately 
from the injectivity of $d\Log$ on the tangent space
to $z\rtor$.
\begin{prop}
The subspace $$(d\Log)(F(z))\subset T_q(\R^{2n})$$
has dimension $r$ for $z\in C_r$, $q=\Log(z)$. 
\end{prop}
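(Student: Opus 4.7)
The plan is to observe that $d\Log$ is already injective on the ambient space $T_z(z\rtor)$, after which the proposition is an immediate dimension count.

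First I would compute $d\Log$ explicitly on $T_z(z\rtor)$. Parametrize this tangent space by
\[
v = (\tau_1 z_1,\ldots,\tau_{2n} z_{2n}), \qquad (\tau_1,\ldots,\tau_{2n}) \in \R^{2n},
\]
since $z\rtor$ is the orbit of $z$ under the real split torus. Using $\log|z_j(1+t\tau_j)| = \log|z_j| + \log|1+t\tau_j|$ and differentiating at $t=0$ yields
\[
d\Log(v) = (\tau_1,\ldots,\tau_{2n}).
\]
Thus $d\Log$ restricts to a linear isomorphism $T_z(z\rtor) \xrightarrow{\sim} T_q(\R^{2n})$; equivalently, $\Log$ is a local diffeomorphism along each component of $z\rtor$.

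Now $F(z) = T_z V \cap T_z(z\rtor)$ is by construction a real subspace of $T_z(z\rtor)$, so $d\Log|_{F(z)}$ is injective. Consequently
\[
\dim_\R (d\Log)(F(z)) = \dim_\R F(z) = r
\]
for $z \in C_r$, which is the desired conclusion.

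There is really no obstacle here: the content is the contrast with the full tangent space $T_z(\ctor)$, where $d\Log$ has a $2n$-dimensional kernel (namely $T_z \Log^{-1}(q)$, the tangent space to the compact argument torus through $z$, spanned by $i z_j \partial_{z_j}$). The subspace $F(z)$ avoids this kernel precisely because it sits inside the real-torus direction, where $d\Log$ is faithful; once this observation is made the proposition follows with no further work.
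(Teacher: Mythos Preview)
Your proof is correct and follows exactly the paper's approach: the paper states that the proposition ``follows immediately from the injectivity of $d\Log$ on the tangent space to $z\rtor$,'' which is precisely what you establish (with an explicit coordinate computation that the paper omits). There is nothing to add.
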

Thus we get a preferred $r$-dimensional subspace in the tangent
space of $\Log(z)$ for each $z\in C_r$.

Let us choose a stratification of the discriminant locus $D$ 
to $k$-dimensional (non-closed) subvarieties $\Sigma_k$,
$$D=\bigcup\limits_{k=0}^{2n-1}\Sigma_k.$$
By a $k$-dimensional multidistribution on 
an open set $U$ of a manifold we mean specifying
a finite set of $k$-dimensional
subspaces of $T_q U$ for every $q\in U$ so that they
depend on $q$ smoothly.
\begin{lem}
For a generic point $q$ of $\Sigma_{2n-k}$ the set 
$\Log^{-1}(q)\cap C_{k}$ is finite and disjoint from $C_{k+1}$.
Furthermore,
for each point $z\in\Log^{-1}(q)\cap C_{k}$
the $k$-dimensional space $(d\Log)(F(z))$ is tangent
to $\Sigma_{2n-k}$. We have a $k$-dimensional
multidistribution (perhaps empty) of
an open dense subset of $\Sigma_{2n-k}$. 
\end{lem}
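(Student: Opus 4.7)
The plan is to analyze $\Log|_V$ stratum by stratum using the pointwise rank. First I would show that at every $z\in C_k\setminus C_{k+1}$ the differential $d\Log|_V$ has rank exactly $2n-k$. Because $T_z V$ is a complex subspace of $T_z\ctor$, multiplication by $i$ preserves $T_z V$ and carries the totally real subspace $T_z(z\rtor)$ isomorphically onto the argument tangent space $\ker(d\Log)_z$; therefore $F(z)\cong T_z V\cap\ker(d\Log)_z=\ker(d\Log|_V)_z$, which has dimension $k$. Writing $q=\Log(z)$, the image of $d\Log|_V$ at $z$ is thus a $(2n-k)$-dimensional subspace of $T_q\R^{2n}$, and by the constant rank theorem the image under $\Log|_V$ of a small neighborhood of $z$ in $V$ is a smooth $(2n-k)$-dimensional submanifold of $\R^{2n}$ contained in the discriminant $D$.

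Next I would use this to locate the fibres over $\Sigma_{2n-k}$. Refining the stratification if necessary so that it is compatible with the rank filtration on $C$, the top-dimensional part of $\Log(C_k\setminus C_{k+1})$ is contained in $\Sigma_{2n-k}$, while $\Log(C_{k+1})$ has dimension at most $2n-k-1$ and so meets $\Sigma_{2n-k}$ in a proper subvariety. Removing this subvariety, together with the critical values of the map $\Log|_{C_k\setminus C_{k+1}}\to\Sigma_{2n-k}$, produces an open dense subset of $\Sigma_{2n-k}$ over which $\Log^{-1}(q)\cap C\subset C_k\setminus C_{k+1}$. Applying Sard's theorem to this restricted map, whose source has rank $2n-k$ and whose target has the same dimension, shows that generic fibres are zero-dimensional, and hence finite by the properness of $\Log$.

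For such a generic $q$ and each $z$ in the finite fibre, $(d\Log)(F(z))$ is $k$-dimensional by the preceding proposition, and it is contained in the image of $d\Log|_V$ at $z$, which by the rank computation of step one coincides with $T_q\Sigma_{2n-k}$; this establishes the tangency. The finite collection $\{(d\Log)(F(z))\}_{z\in\Log^{-1}(q)\cap C_k}$ then depends smoothly on $q$ throughout the open dense set just constructed (and may be empty if no component of $C_k\setminus C_{k+1}$ dominates the given component of $\Sigma_{2n-k}$), yielding the required $k$-dimensional multidistribution.

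The step I expect to be the main obstacle is arranging the stratification so that all three conclusions—finiteness, disjointness from $C_{k+1}$, and smooth dependence of the multidistribution—hold simultaneously on one common open dense subset of $\Sigma_{2n-k}$. This is a technical but standard combination of Whitney stratification theory with generic smoothness via Sard, and care is needed to ensure the rank filtration of $C$ refines, or is at least compatible with, the chosen stratification of $D$.
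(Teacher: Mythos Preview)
Your overall strategy matches the paper's: bound the image of $C_{k+1}$ by a rank estimate, throw away the critical values of the restriction $\Log|_{C_k\cap\Log^{-1}(\Sigma_{2n-k})}$ via Sard, and then argue tangency from the fact that the rank of $d(\Log|_V)$ at $z\in C_k\setminus C_{k+1}$ is exactly $2n-k$. Your identification of $F(z)$ with $\ker(d\Log|_V)_z$ through multiplication by $i$ is a clean way to make this rank statement explicit, more so than in the paper.

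There is, however, a genuine error in your first paragraph that propagates into the tangency argument. You invoke the constant rank theorem to conclude that ``the image under $\Log|_V$ of a small neighborhood of $z$ in $V$ is a smooth $(2n-k)$-dimensional submanifold of $\R^{2n}$ contained in the discriminant $D$.'' Neither clause is correct. The rank of $d(\Log|_V)$ is \emph{not} constant on any $V$-neighborhood of $z$: nearby non-critical points have rank $2n$, and nearby points of $C_{k-1}\setminus C_k$ have rank $2n-k+1$. Consequently the image of a $V$-neighborhood is not a $(2n-k)$-manifold, and it is certainly not contained in $D$ (regular points of $\Log|_V$ map into the interior of the amoeba, not into $D$). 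Your later claim that the image of $d(\Log|_V)_z$ ``coincides with $T_q\Sigma_{2n-k}$'' rests on this step, so as written the tangency conclusion is unjustified.

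The fix is exactly what the paper does (and what you have already set up but not used): you have removed the critical values of $\Log$ restricted to a smooth stratum of $C_k\cap\Log^{-1}(\Sigma_{2n-k})$, so at your generic $q$ the differential of this restriction surjects onto $T_q\Sigma_{2n-k}$. Hence $T_q\Sigma_{2n-k}$ lies inside the image of $d(\Log|_V)_z$; since both spaces have dimension $2n-k$, they agree, and $(d\Log)(F(z))\subset T_q\Sigma_{2n-k}$ follows. Equivalently (in the paper's phrasing), if $(d\Log)(F(z))$ were not contained in $T_q\Sigma_{2n-k}$ then the image of $d(\Log|_V)_z$ would have dimension at least $2n-k+1$, contradicting $z\in C_k$.
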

\begin{proof}
Since $\dim\Sigma_k=k$ and the rank of $d(\Log|_{C_{k+1}})$ is at most $2n-k-1$,
the image $\Log(C_{k+1})$ is nowhere dense in $\Sigma_k$.
Also the critical values of $\Log|_{C_k\cap\Log^{-1}(\Sigma_k)}$
(treated as a map from any of its smooth stratum to the open manifold $\Sigma_{2n-k}$)
are nowhere dense in $\Sigma_k$.
If $(d\Log)(F(z))$ is not tangent to $\Sigma_k\ni\Log(F(z))$ at a regular
point of $\Log|_{C_k}$ then the
rank of $d(\Log|_V)$ is at least $2n-k+1$ which contradicts to
the definition of $C_k$. 
\end{proof}


%

Suppose that $\am\subset\R^{2n}$ is non-degenerate,
i.e. the interior of $\am$ is non-empty.
(This condition is equivalent to the condition $C\neq V$,
i.e. to the condition that $\Log|_V$ has a regular point.)
Then the amoeba boundary $\dd\am$ is a $(2n-1)$-dimensional
subset of $D$.
Let us denote with $\dd_1\am$
the subset of $\dd\am$ formed by points $q$
such that $\Log^{-1}(q)\cap C$ consists of a single point.
\begin{coro}
We have a 1-dimensional non-empty multifoliation on
an open dense set in $\dd\am$.
This is a genuine 1-dimensional foliation on
an open dense set in $\dd_1\am$.
\end{coro}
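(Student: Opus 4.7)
The plan is to apply the preceding lemma with $k=1$ to obtain the $1$-dimensional multidistribution on an open dense subset of $\dd\am$, and then upgrade it to a multifoliation using the trivial integrability of any smooth line field. Since $\am$ is non-degenerate, $\dd\am$ is a $(2n-1)$-dimensional subset of $D$. Choosing the stratification so that $\dd\am\cap\Sigma_{2n-1}$ is open and dense in $\dd\am$, the lemma (with $k=1$) produces on a further open dense subset of $\Sigma_{2n-1}$ a finite fiber $\Log^{-1}(q)\cap C_1$, disjoint from $C_2$, with each preimage $z$ contributing a $1$-dimensional subspace $(d\Log)(F(z))\subset T_q\dd\am$. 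Taking the union over $z\in\Log^{-1}(q)\cap C_1$ gives the $1$-dimensional multidistribution on an open dense subset of $\dd\am$.

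For non-emptiness, I would note that $\Log|_V$ is proper, so any boundary point of $\am$ is a critical value, that is $\Log^{-1}(q)\cap C\neq\emptyset$; at a generic boundary point this preimage lies in $C_1\setminus C_2$, giving a nonzero $F(z)$ by the tangency characterization of the preceding proposition. To upgrade the multidistribution to a multifoliation, I would work locally around a generic point of $\dd_1\am$, where the single preimage $z$ sits in the smooth locus of $C_1\setminus C_2$ and the assignment $z\mapsto F(z)$ is a smooth rank-$1$ subbundle of $TV|_{C_1\setminus C_2}$. Locally around such a $q$, $\Log$ restricts to a diffeomorphism from a neighborhood of $z$ in $C_1\setminus C_2$ onto a neighborhood of $q$ in $\Sigma_{2n-1}$, transporting $F$ to a smooth $1$-dimensional distribution on $\dd_1\am$. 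Any smooth line field is integrable, so its integral curves provide a genuine $1$-dimensional foliation on an open dense subset of $\dd_1\am$. Over $\dd\am\setminus\dd_1\am$, each of the finitely many preimage sheets yields its own local foliation by the same construction, and their union is the multifoliation.

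The main obstacle is justifying that $\Log$ restricts to a local diffeomorphism from the smooth part of $C_1\setminus C_2$ onto $\Sigma_{2n-1}$ at a generic boundary point, so that the transported line field is smooth. This reduces to: the dimension count $\dim(C_1\setminus C_2)=\dim\Sigma_{2n-1}=2n-1$ generically; the fact that the generic rank of $d\Log|_{C_1\setminus C_2}$ must equal $2n-1$ (otherwise its image would have dimension less than $2n-1$, contradicting that it contains an open dense subset of $\dd\am$); and the injectivity over $\dd_1\am$ built into the definition of $\dd_1\am$. These are the same genericity considerations already used in the proof of the preceding lemma.
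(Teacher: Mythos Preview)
Your argument is correct and is precisely the intended one: the paper states the corollary without proof, as a direct consequence of the preceding lemma with $k=1$ combined with the trivial integrability of line fields and the observation (via properness of $\Log|_V$) that every boundary point of $\am$ is a critical value, so the multidistribution there is non-empty. Your extra care in justifying the local diffeomorphism from the smooth part of $C_1\setminus C_2$ onto $\Sigma_{2n-1}$ (using $\dim C\le 2n-1$ and the image-dimension argument) goes slightly beyond what the paper spells out, but the reasoning is sound and exactly in the spirit of the lemma's own proof.
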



 

\begin{rmk}
Since $V$ is $n$-dimensional (over $\C$) and $z\rtor$
is $2n$-dimensional (over $\R$) and totally real,
the maximal dimension of $F(z)$ is $n$.

Suppose that $V=\conj V$, i.e. $V$ is defined over $\R$.
Then we have $\R V=V\cap\rtor\subset C_{n}$. 
\end{rmk}

\subsection{Dimensions greater that half} 
Suppose that $V$ is
a $k$-dimensional algebraic variety in $(\C^\times)^n$
with $k>\frac n2$.
Then the generic fibers of $\Log|_V$ and $\Arg_\pi|_V$
are $n-2k$-dimensional varieties as $V$ is $2k$-dimensional
(over $\R$) and $\R^n$ is $n$-dimensional.
We may still present 
generic fibers  of $\Arg_\pi|_V$
and $\Log|_V$
as real algebraic varieties in a way similar to
the half-dimensional case (where those fibers were points).
For $p\in T^n_\pi$ and $q\in\R^n$
we consider the antiholomorphic involutions
$\conj_p,\conj'_q:(\C^\times)^n\to (\C^\times)^n$ defined by
\begin{equation}\label{conjp}
\conj_p(z)=e^{ip}(\conj(e^{-ip}z)),\
\conj'_q(z)=e^{q}(\conj'(e^{-q}z)),
\end{equation}
where $\conj$ and $\conj'$ are defined 
as in \eqref{conj}:
$$
\conj(z_1,\dots,z_{n})=(\bar z_1,\dots,\bar z_{n}),\
\conj'(z_1,\dots,z_{n})=(\frac1{\bar z_1},\dots,\frac1{\bar z_{n}}).
$$
Note that the fixed point set of $\conj_p$ is
$\Arg^{-1}_\pi(p)=e^{ip}(\R^\times)^n$ while
the fixed point set of $\conj'_q$ is 
$\Log^{-1}(q)$.
The following proposition is straightforward.
\begin{prop}
The antiholomorphic involutions $\conj_p$, $\conj_q$
act on algebraic varieties $V\cap\conj_p(V)$
and $V\cap\conj'_q(V)$ so that
the fixed point sets are $(\Arg_\pi|_V)^{-1}(p)$ and
$(\Log|_V)^{-1}(q)$.

Thus we may think of the fibers of $\Arg_\pi|_V$ and
$\Log|_V$ as real algebraic varieties whose complexification
is $V\cap\conj_p(V)$ and $V\cap\conj'_q(V)$. 
For regular fibers these varieties are non-singular 
$(n-2k)$-dimensional varieties near their real points.
\end{prop}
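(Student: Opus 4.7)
The plan is a direct unwinding of the definitions, with a transversality argument at regular values as the only substantive step. First I would note that $\conj_p$ and $\conj'_q$ are antiholomorphic involutions of $(\C^\times)^n$ with fixed loci $e^{ip}(\R^\times)^n = \Arg_\pi^{-1}(p)$ and $e^q S = \Log^{-1}(q)$ respectively, by direct computation from \eqref{conjp} and the fact that $\conj^2 = (\conj')^2 = \id$. Next I would verify that $V \cap \conj_p(V)$ is preserved by $\conj_p$: if $z$ lies in this intersection then $z \in \conj_p(V)$ gives $\conj_p(z) \in V$, and $z \in V$ gives $\conj_p(z) \in \conj_p(V)$, so $\conj_p(z)$ again lies in the intersection. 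The case of $\conj'_q$ acting on $V \cap \conj'_q(V)$ is verbatim the same.

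The fixed-point identification is then tautological: a point of $V \cap \conj_p(V)$ is $\conj_p$-fixed iff it lies in $\fix(\conj_p) = \Arg_\pi^{-1}(p)$, so the fixed locus coincides with $V \cap \Arg_\pi^{-1}(p) = (\Arg_\pi|_V)^{-1}(p)$; likewise for $\conj'_q$. The real-structure interpretation is automatic, since an antiholomorphic involution on a complex analytic set defines a real structure whose set of real points is by definition the involution's fixed locus, with complexification recovered as the ambient complex variety.

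The main obstacle, to the extent there is one, is the smoothness and dimension statement for regular fibers. For a regular $p$ of $\Arg_\pi|_V$, at each $z$ in the fiber the real tangent space $T_zV \subset T_z(\C^\times)^n$ meets the totally real subspace $T_z\Arg_\pi^{-1}(p) = \fix(d\conj_p)$ transversally. Since $d\conj_p$ is an antilinear involution of $T_z(\C^\times)^n$ sending $T_zV$ onto $T_z\conj_p(V)$, I would upgrade this real transversality to complex transversality $T_zV + T_z\conj_p(V) = T_z(\C^\times)^n$ by decomposing an arbitrary ambient vector into its $\pm 1$-eigenparts under $d\conj_p$: the $(+1)$-part is hit by $T_zV$ by real transversality, while the $(-1)$-part is realized as the difference of a vector in $T_zV$ and its $d\conj_p$-image in $T_z\conj_p(V)$. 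Hence $V$ and $\conj_p(V)$ meet transversally near the real locus, and $V \cap \conj_p(V)$ is a smooth complex variety of the expected complex dimension $2k-n$ there, so its set of real points is a smooth real variety of the same dimension. The argument for $\conj'_q$ is identical with $\Log^{-1}(q)$ in place of $\Arg_\pi^{-1}(p)$.
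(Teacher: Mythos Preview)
The paper gives no proof of this proposition; it simply declares it ``straightforward'' immediately before the statement. Your argument supplies the details the paper omits, and the first two paragraphs (invariance of $V\cap\conj_p(V)$ under $\conj_p$, identification of the fixed locus with the fiber, and the real-structure interpretation) are clean and correct.

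The transversality step is the right idea, but your wording for the $(+1)$-eigenspace is imprecise: real transversality $T_zV + \fix(d\conj_p) = T_z(\C^\times)^n$ does not by itself say that a vector in $\fix(d\conj_p)$ is ``hit by $T_zV$''. The cleanest fix is to note that your argument for the $(-1)$-part already shows the entire $(-1)$-eigenspace lies in $T_zV + T_z\conj_p(V)$; since the latter sum is a \emph{complex} subspace and multiplication by $i$ exchanges the $(\pm 1)$-eigenspaces of the antilinear involution $d\conj_p$, the $(+1)$-eigenspace is then automatically contained as well. With that small adjustment the transversality of $V$ and $\conj_p(V)$ at real points, and hence the smoothness and complex dimension $2k-n$ of the intersection there, go through; the case of $\conj'_q$ is identical.
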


\begin{exa}
Consider the plane 
\begin{equation}\label{plane}
V=\{(x,y,z)\in (\C^\times)^3\ |\ 1+x+y+z=0\}
\end{equation}
in $(\C^\times)^3$.
Both $V$ and $\conj_p(V)$ are planes, so fibers
of $\Arg_\pi|_V$ are intersections of two real planes
in $(\R^\times)^3$ after a multiplicative translation by $e^{ip}$.
For generic fibers these two planes are transversal,
so their intersection is a line.
For special fibers these planes might be parallel planes,
or two copies of the same plane. These special cases correspond to
empty or two-dimensional fibers of $\Arg\pi|_V$.

The surface $\conj'_q(V)$ is the image of a plane
under the Cremona transformation $x\mapsto \frac 1x$,
$y\mapsto\frac 1y$, $z\mapsto\frac 1z$.
For a generic $q$ the intersection of $V$ and $\conj'_q(V)$
is a smooth elliptic curve. Its real locus may be empty,
or consist of one or two circles. All three cases are realized
as generic fibers of $\Log|_V$ for $V$ given by \eqref{plane}.

\end{exa}

\bibliography{b}
\bibliographystyle{plain}

\end{document}